\newtheorem {theorem} {Theorem} [subsection] 
\newtheorem {conjecture} [theorem] {Conjecture}
\newtheorem{lemma}[theorem] {Lemma}
\newtheorem {definition} [theorem] {Definition} 
\newtheorem {proposition}  [theorem]{Proposition} 
\newtheorem {corollary}[theorem]  {Corollary} 
\newtheorem {example} [theorem]  {Example}
\newtheorem {notation} {Notation} [section]
\newtheorem {remark} [theorem] {Remark}
\numberwithin {equation} {section}
\DeclareMathOperator{\image}{image}
\begin{document} 
\author {Yasha Savelyev}
\address{savelyev@math.umass.edu, Department of Mathematics
 and Statistics,  Lederle Graduate Tower, University of Massachusets, 
 Amherst, MA, 01003} 
\title   {Spectral geometry of  the group of Hamiltonian symplectomorphisms}
\begin{abstract} We introduce here a  natural functional associated to any $b
\in QH_* (M, \omega)$: \emph{spectral length functional}, on the space of
``generalized paths'' in $ \text {Ham}(M, \omega)$, closely related to both the Hofer length
functional and spectral invariants and establish some of its properties. 
This
functional is smooth on its domain of definition, and moreover the nature of
extremals of this functional suggests that it may be variationally complete,  in
the sense that any  suitably generic element of $ \widetilde{\text {Ham}}(M,
\omega)$ is connected to $id$  by a generalized path minimizing spectral length. 
Rather strong evidence is given for this when $M=S ^{2}$, where we show that all the
Lalonde-McDuff Hamiltonian symplectomorphisms are joined to id by such a path. We
also prove that the associated norm on $ {\text {Ham}}(M,
\omega)$ is non-degenerate and bounded from below by the   the spectral norm.
If the spectral length functional is variationally complete the associated norm
reduces to the spectral norm.
 \end{abstract}
\maketitle 
\section {Introduction} 
The Hofer length functional and the resulting Finsler geometry on $ \text
{Ham}(M, \omega)$: the Hofer geometry, has been one of the driving forces in
symplectic geometry. It provides a direct link from the world of  symplectic
topology to the world of metric spaces. This is combined with  its naturality, 
simplicity of definition, and intriguing connections with ``hard'' symplectic
geometry: for example in the sense of elliptic methods of Gromov and Floer. 

However this functional has  a disturbing variation incompleteness problem, as
Lalonde-McDuff \cite{Energy.stab.I} observed that in $ \text {Ham}(S ^{2},
 \omega)$, there are elements not joined by locally (in the path space) length
 minimizing Hofer geodesics, i.e. stable geodesics. For future reference we
 shall refer to these elements by \textbf{Lalonde-McDuff symplectomorphisms}. 
 This is one of the  problems stalling progress in understanding global
 metric properties of $ \text {Ham}(M, \omega)$. 
 
 The main goal here is to describe another
highly natural functional $L ^{s} _{j}$, depending on an almost complex
structure $j$ on $M$, on a certain ``generalized path space'' of $ \text
{Ham}(M, \omega)$.  This functional is no longer so elementary 
in nature and is deeply intertwined
 with chain level Floer theory, and spectral invariants. It may seem awkward
 that $L ^{s} _{j}$ depends on $j$, but  surprisingly extremals for $L
 ^{s} _{j}$ and their lengths are independent of $j$. Consequently if $L ^{s}
 _{j}$ was suitably variationally complete the
associated distance function would depend only on $\omega$, and in this case
we show that the associated spectral distance from $id$ to $ \widetilde{\phi}
\in \widetilde{\text {Ham}}(M, \omega)$ reduces to the spectral norm of $
\widetilde{\phi}$. 

Here is the main criterion for being $L ^{s} _{j}$
minimizing that is used in this paper.
 \begin{proposition} \label{thm.adiabatic} Suppose
a pair of regular $ \widetilde{\phi} _{\pm} \in \widetilde{\text {Ham}}(M,
\omega)$ can be connected by a strong Cerf homotopy in degree 2n, then there is
an $L ^{s} _{j}$ minimizing generalized path between $ \widetilde{\phi} _{\pm}$.
\end{proposition}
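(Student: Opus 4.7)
The plan is to construct an explicit $L^s_j$ minimizing generalized path by tracking a distinguished degree $2n$ Floer generator along the given strong Cerf homotopy, then identify its length with the spectral norm to certify minimality via the lower bound recalled in the introduction.

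First, I would unpack the Cerf homotopy. In degree $2n$ it should provide a generic $1$-parameter family of Hamiltonians interpolating between those generating $\widetilde{\phi}_\pm$, with only birth-death catastrophes occurring in the stratum of degree $2n$ Floer generators. Outside the catastrophe parameters, each slice carries a distinguished generator representing the fundamental class $[M]$, and tracking it yields a piecewise smooth family of $1$-periodic orbits. I would then close this family up into a generalized path $\gamma$ from $\widetilde{\phi}_-$ to $\widetilde{\phi}_+$ by inserting short connectors built from the local birth-death model at the jumps.

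Next, I would evaluate $L^s_j(\gamma)$. On each smooth segment the integrand of the spectral length functional reduces to the instantaneous rate of change of the action of the tracked generator, so the segment contributes its total action variation. At each birth-death the connector contribution can be made to vanish in an adiabatic limit of the associated parametrized Floer moduli — this is the content alluded to by the label of the proposition. Summing over segments gives $L^s_j(\gamma)$ equal to the spectral distance between $\widetilde{\phi}_\pm$.

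Finally, combining with the general lower bound from the introduction — that every generalized path has $L^s_j$ length at least the spectral norm of its endpoints — forces $\gamma$ to be $L^s_j$ minimizing. The main obstacle I expect is the adiabatic analysis at the birth-deaths: verifying that the parametrized moduli space degenerates as predicted and that its vanishing length contribution is actually realized by an admissible connector in the author's technical sense of generalized path. The rest is largely bookkeeping across strata, so this is where the real analytic work lies.
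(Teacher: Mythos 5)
Your overall strategy is the right one: produce a candidate morphism from the Cerf homotopy, show its spectral length equals the spectral norm, and invoke the lower bound of Proposition \ref{theorem.main} to certify minimality. The paper does exactly this in its last step. But the construction you propose in the middle is misdirected, and the place where you locate ``the real analytic work'' is precisely where the definition of a strong Cerf homotopy is designed to make no work necessary. By hypothesis there exist \emph{globally smooth} sections $u^i$ of $M \times [0,1]\times S^1$ restricting to flat sections $\gamma^i_r$ of $\mathcal{A}_r$ over every slice, with $\sum_i c_i\,\widetilde{\gamma}^i_1$ representing $PSS([M])$ in degree $2n$. The tracked generators therefore do \emph{not} undergo birth--death; the Cerf bifurcations occur elsewhere in the complex (cf.\ Lemma \ref{lemma.cerf}, where new generators appear in degrees $2,3,4$ while the degree-$2n$ generator persists). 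Consequently there are no jumps, no connectors to insert, and no adiabatic limit to analyze. One simply extends $\{\mathcal{A}_r\}$ to a single morphism $\mathcal{A}\in\mathcal{P}(id,\widetilde{\phi}_+)$; the $u^i$ become $\mathcal{A}$-flat, hence $J_{\mathcal{A}}$-holomorphic, sections, and the identity $\int_{u_i} K_{\mathcal{A}}\,\pi^*\omega_{st} = A_{H_1}(\widetilde{\gamma}^i_1)$ (the telescoped action variation you correctly anticipate on smooth segments) gives $L^+_j(\mathcal{A})\le\rho(\widetilde{\phi},[M])$ directly.

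Two further points. First, your adiabatic claim at birth--deaths is not merely unnecessary but unsubstantiated: you would need to show that the parametrized moduli degeneration produces an admissible element of $\mathcal{P}_\pm$ (a formal concatenation of simple morphisms) with vanishing length contribution, and nothing in the paper's framework supplies this; had the hypothesis actually permitted the degree-$2n$ generator to die, your proof would stall exactly there. Second, you only treat the $L^+$ half of $L^s_j = L^+_j - L^-_j$; the bound $L^-_j(\mathcal{A})\le -\rho(\widetilde{\phi}^{-1},[M])$ requires the third clause of the definition (the same structure for the reversed family $\{\overline{\mathcal{A}}_r\}$), which your argument never invokes. You should also note the regularity issue the paper flags: the flat sections $u_i$ must be regular for their count to be meaningful, which is arranged by a small perturbation of $\mathcal{A}$.
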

A strong Cerf homotopy $H _{r}: M \times S ^{1} \to \mathbb{R}$, $0 \leq
r \leq 1$, in degree 2n,  is roughly speaking a Cerf-Floer homotopy with 
nice behavior in CZ degree 2n. 
Details are given in Section \ref{section.exotic}.  To state our main results we
need to give the full background.
\subsection{Spectral length functional}
\subsubsection {Hamiltonian category $ \mathcal {G} _{M}$}
\label{section.hamcategory} Let $(M, \omega)$ be a closed symplectic manifold. To this we associate a
topological category  whose objects are lifts to the universal cover
of Hamiltonian symplectomorphisms of $M$ and the ``simple'' morphism space $
\mathcal {P} ^{s} _{ {\pm}}$ from $ \widetilde{\phi} _{-}$ to
$ \widetilde{\phi} _{+}$ consists of Hamiltonian connections $ \mathcal {A}$ on
the topologically trivial bundle $$\pi: P = M \times ( \mathbb{R} \times S ^{1}) \to \mathbb{R} \times S ^{1},$$ with the property
that  $\mathcal{A}$ over $r \times S ^{1} \subset \mathbb{R} \times S ^{1}$
stabilizes to $\mathcal {A} _{-}$, respectively $ \mathcal {A} _{+}$ for
sufficiently small, respectively large $r$, and the holonomy of $ \mathcal {A}
_{\pm}$ is $ \widetilde{\phi} _{\pm} \in  \widetilde{\text {Ham}}(M, \omega)$.
Also for $r$ sufficiently small, respectively large  $ \mathcal {A}$ is assumed to be flat in the $r$
direction, i.e. the horizontal lift of $ \frac{\partial}{\partial r}| _{z}$ 
 is $\frac{\partial}{\partial r}| _{x, z}$, (and consequently $ \mathcal {A}$ is
 flat outside a compact region of $P$ by above assumptions). The compact region of
 $P$, where $ \mathcal {A}$ is not assumed to be $r$-flat, will be called \emph{principal}. 
 \begin{remark} We should be able to relax the notion of morphism to allow $ \mathcal {A}$, 
 which are only asymptotic to flat connections as $r
\hookrightarrow \pm \infty$, converging sufficiently fast. All of the discussion
in this paper should go through for this case. We have no use for such morphisms
here, but they may potentially be useful when studying the question of
variational completeness more closely. We should also be able to take 
higher genus Riemann surfaces with boundary instead of $
\mathbb{R} \times S ^{1}$ and take tuples $ \{ \widetilde{\phi} _{i}\}$ of lifts
of Hamiltonian symplectomorphisms as objects, analogously to
\cite{LalondeFieldtheory}.
\end{remark}

We then partially compactify this space by allowing SFT style degenerations of
the morphisms. The full morphism space $ \mathcal {P} ( \widetilde{\phi} _{-},
\widetilde{\phi} _{+})$ often abbreviated $\mathcal {P} _{\pm}$, then consists
of simple morphisms and formal concatenations of simple morphisms. The identity morphism $id _{ \widetilde{\phi}}$ in this category is just an $r$-flat connection $ \mathcal
{A} _{ \widetilde{\phi}}$, with holonomy over $r \times S ^{1}$ being $
\widetilde{\phi}$ for every $r$. For this to be a true category we need to put a mild equivalence relation, which we
suppress. (More appropriately this is a 2-category.) 
 \subsubsection {From paths in $ \text {Ham}(M, \omega)$ to morphisms of $
\mathcal {G} _{M}$} \label{sec.path.groupoid} Let
 $$p: [0,1] \to {\text {Ham}}(M, \omega),$$ $p (0)= id$, be generated by $H: M
 \times [0,1] \to \mathbb{R}$. 
To such a $p$ there is a
 naturally associated Hamiltonian connection $\mathcal{A} _{p}$ 
 on  $P \to \mathbb{R} \times S ^{1}$, which 
is a morphism from $id$ to $p (1)$ in $ \mathcal {G} _{M}$. Explicitly $ \mathcal
{A} _{p}$ is constructed as follows: for $ 0 \leq r \leq 1$ the
 horizontal lift of $ \frac{\partial}{\partial \theta} \in T _{r, \theta}
 (\mathbb{R} \times S ^{1})$ is determined by the condition that the holonomy
 path $\phi _{\theta}$, $\theta \in S ^{1}$, of $ \mathcal {A}| _{r \times S
 ^{1}}$ is generated by $ \eta (r) \cdot H$, while being flat in the $r$
 direction, where 
  $\eta: \mathbb{R} \to \mathbb{R}$ is a fixed function
 satisfying:
 \begin{equation} \label {eq.eta} \eta (r) = \begin{cases} 1 & \text{if
  } 1 -\delta \leq r \leq 1 ,\\ r   & \text{if } 2 \delta \leq r \leq 1-2\delta,
  \\ 0 & \text {if } r \leq \delta
   \end{cases}
\end{equation}
 for a small $\delta >0$. 
\subsubsection {Floer chain complex} \label{section.Floer.chain.complex} 
For more details on Floer homology and Hamiltonian perturbations
 the reader may see for example \cite[Chapter
8, 12]{MS}. This is a chain complex associated to a 
 $ \widetilde{\phi} \in \widetilde{\text {Ham}}(M, \omega)$. We fix an
 $\omega$-compatible complex structure $j$ on $M$ once and for all. For the definition of the chain
complex we need a suitably generic Hamiltonian connection $ \mathcal {A}
_{ \widetilde{\phi}}$  on $M \times S ^{1}$ with holonomy $ \widetilde{\phi}$,
such a connection will be called Morse-Floer. Then $CF_* ( \mathcal {A} _{
\widetilde{\phi}})$ is generated over $ \mathbb{Q}$ by pairs $
\widetilde{\gamma}=(\gamma, \bar{\gamma})$, with $\gamma$ a flat section of $
\mathcal {A} _{ \widetilde{\phi}}$ and $ \bar{\gamma}$ an equivalence class of 
a bounding section in $M \times D ^{2}$, with two bounding disks considered equivalent if their difference (as 2-chains) is annihilated by $c_1 (TM)$. The grading is
given by Conley-Zehnder index. The boundary operator is defined through count of
$J _{ \mathcal {A} _{ \widetilde{\phi}}}$-holomorphic sections of $P$, whose
projections to $M \times S ^{1}$ are asymptotic in backward,  forward time to
generators of $CF_* ( \mathcal {A}_{ \widetilde{\phi}})$. Here $J _{ \mathcal
{A} _{ \widetilde{\phi}}}$ is the complex structure induced by the extension of
the connection $ \mathcal {A} _{ \widetilde{\phi}}$ to a connection on $P$, flat
in the $r$ direction, more details are given below.
\subsubsection{From morphisms of $ \mathcal {G}$  to almost complex structures
on $P$}
Given $ \mathcal {A} \in \mathcal {P} _{\pm}$  there is a
\emph{$\pi$-compatible} almost complex structure $J _{\mathcal{A}}$ on
 $P$ induced by $\mathcal{A}$, which means the following:
\begin{itemize} 
  \item The natural map
$\pi: (P, J _{ \mathcal {A}})
\to ( \mathbb{R} \times S ^{1}, j)$ is holomorphic.  
\item  $J _{ \mathcal {A}}$ preserves the horizontal subbundle $Hor
^{ \mathcal {A}}$ of $TX$ induced by $ \mathcal {A}$.
\item $J _{ \mathcal {A}}$ preserves the vertical tangent bundle of $M
\hookrightarrow P \to \mathbb{R} \times S ^{1}$, and restricts to the fixed
complex structure $j$ on $M$.
\end{itemize}
\begin{notation} To cut down on notation we will sometimes forgo notationally 
distinguishing between $\phi \in \text {Ham}(M, \omega)$ and its lifts to
universal cover.
\end{notation}
\subsubsection {Definition of the spectral length functional} Let $(M, \omega)$
be a closed monotone symplectic manifold with a positive monotonicity constant and 
let $ \mathcal {A} _{\pm}=\mathcal {A} _{\phi _{\pm}}$ be Morse-Floer.
Let $ \{\widetilde{\gamma}  ^{-}_{i} \}$,  $ \{\widetilde{\gamma}  ^{+}_{j} \}$
be a collection of natural geometric generators for $CF _{k}( \mathcal
{A}_{-})$, respectively $CF _{k}(
\mathcal {A}_{+})$  over $ \mathbb{Z}$, and let $\mathcal {M} ( \widetilde{\gamma}
^{-}_{i},  \widetilde {\gamma} ^{+}_{j})$ 
denote the moduli space (whose virtual dimension is 0 by the index theorem) of
holomorphic sections of $P$ asymptotic to $ \widetilde{\gamma} _{i} ^{-}$, $
\widetilde{\gamma} _{j} ^{+}$.


 
 Now let $K _{\mathcal A}: \mathbb{R} \times S ^{1} \to C
 ^{\infty} (M)$ denote the Hodge star of the Lie algebra valued curvature form
 $R _{
 \mathcal {A}}$ of $\mathcal {A}$, with respect to standard Kahler metric $g_j$ on $ \mathbb{ \mathbb{R}}
  \times S ^{1}$. In other words $K _{\mathcal A} (z) \in C ^{\infty} (M)$ is the
  Lie algebra element $R _{ \mathcal {A}} (v, w)$, for $v, w$ an orthonormal pair at $z \in  \mathbb{R} \times S ^{1}$. 
  However, we will instead think of $K
   _{\mathcal {A}}$ in terms of  the naturally associated function $K _{\mathcal
   {A}}: P \to \mathbb{R}$. 
We now define a variant of the continuation map: $$ $$ 
\begin{align} \label {eq.contE} \Psi _{E} ^{k}: CF_k ( \mathcal {A}
_{-}) \to CF_k(\mathcal {A}_+), \\
\Psi _{E} ^{k}( \widetilde{\gamma} _{i} ^{-}) = \sum _{j} \# \mathcal {M} (
\widetilde{\gamma} ^{-}_{i},  \widetilde {\gamma} ^{+}_{j}) _{E}
\,\widetilde{\gamma} ^{+} _{j},
\end{align}
where $\mathcal {M} ( \widetilde{\gamma} 
^{-}_{i},  \widetilde {\gamma} ^{+}_{j}) _{E}$ is the space of $J _{
\mathcal {A}}$-holomorphic sections $u$ which satisfy \begin{equation} \label
{eq.integral}\int _{u} K _{ \mathcal {A}} \,\pi ^{*} \omega _{st} \leq E,
\end{equation}
with $\omega _{st}$ denoting the standard area form on $ \mathbb{R} \times S
^{1}$. 
 \begin{definition}  
For a degree $k$ element $b \in QH_* (M)$ we define
\begin{equation*} L^s _{j} (\mathcal{A}, b) =  \inf \{E| \text {s.t. } PSS(b) \in
FH_* ( \mathcal {A} _{+})
\text { can be represented by a chain in }\image \Psi _{E} ^{k} \}.
\end{equation*} 

\end{definition} 

 Note that by our assumptions on $ \mathcal {A}$, $K _{\mathcal {A}}$
vanishes outside a compact subset of $P$ so that the integral
\eqref{eq.integral} is finite. When $b$ is the fundamental class we 
abbreviate $L ^{s} _{j} ( \mathcal {A}, [M])$ by $L ^+ _{j} ( \mathcal {A})$.
When $ \widetilde{\phi} _{-}= id$, we define $L ^{+} _{j} ( \mathcal {A})$
naturally, by dropping constraints on the left in the definition of $\Psi ^{k}
_{E}$, i.e. we just count elements in $ \mathcal {M} ( \widetilde{\gamma}  ^{+}
_{j})$. This will be an important special case.

Our assumption that $M, \omega$ is monotone with positive monotonicity constant
insures that only finitely many holomorphic sections $u$ come into the
definition of $\Psi _{E} ^{k}$, and consequently $L ^{s} _{j}$ is a priori defined
and is \emph{smooth} on an open dense subspace of $ \mathcal {P} _{{\pm}}$, 
consisting of \emph{regular} $ \mathcal {A}$, i.e. those for which the associated 
CR operator is surjective for each $u \in \mathcal {M} (
\widetilde{\gamma} ^{-}_{i},  \widetilde {\gamma} ^{+}_{j})$. 

 Let $ \overline {
\mathcal {A}} =(id \times \sigma) ^{*}\mathcal {A}$, where 
\begin{equation} \label {eq.overline} id \times \sigma:
M \times ( \mathbb{R} \times S ^{1}) \to M \times ( \mathbb{R} \times S ^{1}),
\end{equation}
 and where
$\sigma$ is an orientation preserving reflection. Then $\overline {\mathcal
{A}}$ is a morphism from $\phi_+ ^{-1}$ to $\phi_- ^{-1}$. We set  $L ^{-} _{j}
( \mathcal {A}) = L ^{s} _{j} ( \overline{ \mathcal {A}}, [M])$. 
Define $$ L ^{s} _{j} ( \mathcal {A})= L ^+ _{j} ( \mathcal {A}) - L ^{-} _{j} (
\mathcal {A}).$$
and define a function $\widetilde{\text {Ham}}(M, \omega) \to \mathbb{R}$: 
\begin{align*}  \widetilde{\phi} \mapsto | \widetilde{\phi}| _{j} \equiv
\inf _{ \mathcal {A} \in \mathcal
 {P} (id, \widetilde{\phi})} |L ^{s} _{j} ( \mathcal {A})| ,
\end{align*}
We will use the
same notation for the pushdown of this function to $ \text {Ham}(M, \omega) $, i.e. 
\begin{equation*} |\phi| _{j}= \inf \{ | \widetilde{\phi}| _{j} \text { s.t. }\,
\widetilde{\phi}\text { projects to }  \phi \}.
\end{equation*}
\subsection {Main results}
\begin{proposition} \label{theorem.main} The  ``norm" $|\cdot|_j$ is
non-degenerate.
Furthermore \begin{equation*} L ^{s} _{j} ( \mathcal {A}) \geq 
|\widetilde{\phi}_+|_{s} - | \widetilde{\phi} _{-}| _{s},
\end{equation*}
where $ | \widetilde{\phi}| _{s}$ is the usual spectral pseudo norm of
Oh-Schwarz on the universal cover.  
\end{proposition}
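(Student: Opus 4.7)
The plan is to combine the standard action-energy estimate for $J_{\mathcal{A}}$-holomorphic sections with a chain-level filtration argument on the truncated continuation $\Psi_{E}^{k}$. As a preliminary step, I would derive the connection-theoretic analogue of Floer's classical energy identity: for a section $u \in \mathcal{M}(\widetilde{\gamma}^{-}, \widetilde{\gamma}^{+})_{E}$, the non-negativity of the $\omega$-area (forced by $J_{\mathcal{A}}$-tameness of the vertical symplectic form) combines with the identity expressing this area as the action difference at the asymptotes plus the curvature integral to yield
\begin{equation*}
\mathcal{A}_{+}(\widetilde{\gamma}^{+}) - \mathcal{A}_{-}(\widetilde{\gamma}^{-}) \;\leq\; \int_{u} K_{\mathcal{A}}\, \pi^{*} \omega_{st} \;\leq\; E.
\end{equation*}

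Next I would promote this pointwise estimate to a spectral one. Whenever $E$ exceeds $L^{+}_{j}(\mathcal{A})$, the class $PSS([M]) \in FH_{*}(\mathcal{A}_{+})$ is represented by $\Psi_{E}^{k}(x)$ for some $x \in CF_{*}(\mathcal{A}_{-})$. Since the untruncated continuation $\Psi$ is a chain homotopy equivalence inducing the identity on $QH_{*}(M)$, the cycle $x$ must itself represent $PSS([M])$ in $FH_{*}(\mathcal{A}_{-})$; applying the action estimate generator-by-generator to $\Psi_{E}^{k}(x)$ and passing to the infimum over admissible $x$ yields
\begin{equation*}
\rho(\widetilde{\phi}_{+}, [M]) \;\leq\; \rho(\widetilde{\phi}_{-}, [M]) + L^{+}_{j}(\mathcal{A}),
\end{equation*}
using the standard fact that spectral invariants of a Morse-Floer connection coincide with those of the holonomy. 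Running the parallel argument for $\bar{\mathcal{A}}$, which is a morphism $\widetilde{\phi}_{+}^{-1} \to \widetilde{\phi}_{-}^{-1}$, invoking the identification $L^{-}_{j}(\mathcal{A}) = L^{+}_{j}(\bar{\mathcal{A}})$, and applying the Oh-Schwarz duality $\rho(\widetilde{\phi}^{-1}, [M]) = -\rho(\widetilde{\phi}, [\mathrm{pt}])$ produces the matching bound for the inverse side. Assembling both bounds using $|\widetilde{\phi}|_{s} = \rho(\widetilde{\phi}, [M]) + \rho(\widetilde{\phi}^{-1}, [M])$ and the definition $L^{s}_{j} = L^{+}_{j} - L^{-}_{j}$, with attention to the sign conventions built into the reflection $\sigma$, produces the advertised $L^{s}_{j}(\mathcal{A}) \geq |\widetilde{\phi}_{+}|_{s} - |\widetilde{\phi}_{-}|_{s}$.

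Non-degeneracy then follows by specialization. Setting $\widetilde{\phi}_{-} = id$ and $\widetilde{\phi}_{+} = \widetilde{\phi}$, the main inequality gives $L^{s}_{j}(\mathcal{A}) \geq |\widetilde{\phi}|_{s} \geq 0$, hence $|L^{s}_{j}(\mathcal{A})| \geq |\widetilde{\phi}|_{s}$; taking infimum over $\mathcal{A}$ and over lifts of $\phi$ yields $|\phi|_{j} \geq |\phi|_{s}$, and non-degeneracy of $|\cdot|_{j}$ reduces to Oh's classical theorem that the spectral pseudo-norm is non-degenerate on $\text{Ham}(M, \omega)$.

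The main technical obstacle I anticipate is the passage from the pointwise section estimate to the chain-level conclusion at finite $E$: the truncated map $\Psi_{E}^{k}$ need not be a chain map, so one has to argue that $E$ can be taken close enough to the infimum $L^{+}_{j}(\mathcal{A})$ that the truncation no longer discards sections relevant to the boundary operator, or alternatively carry out the filtration argument directly in terms of action-sublevel subcomplexes. A closely related delicacy is the sign bookkeeping under the reflection $\sigma$: while the energy identity $L^{-}_{j}(\mathcal{A}) = L^{+}_{j}(\bar{\mathcal{A}})$ is clean, the Oh-Schwarz pairing of spectral invariants for $\widetilde{\phi}$ and $\widetilde{\phi}^{-1}$ must be matched against the orientation conventions fixed in the definition of $\mathcal{P}_{\pm}$, since this is precisely what places the $L^{-}_{j}$ contribution on the correct side of the final inequality.
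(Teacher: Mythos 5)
Your proposal follows essentially the same route as the paper: the paper's two inequalities are exactly your action--energy estimate, phrased via the Guillemin--Sternberg coupling form $\Omega_{\mathcal{A}}$ (whose integral over a section is the action difference and whose sum with $K_{\mathcal{A}}\,\pi^*\omega_{st}$ is non-negative on $J_{\mathcal{A}}$-holomorphic sections), and the paper likewise doubles the argument via $\overline{\mathcal{A}}$ and leaves non-degeneracy to follow from the spectral norm via the $\widetilde{\phi}_-=id$ specialization. The chain-level subtlety you flag (choosing the representative $x$ at action level near $\rho(\widetilde{\phi}_-,[M])$ so the truncated continuation argument closes) is present in the paper too, where it is compressed into the unproved assertion that a section $u$ exists with both asymptotic actions equal to the respective spectral invariants; your version is, if anything, more explicit about where the work lies.
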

\begin{definition} We call $ \mathcal {A}$ quasi-flat if  $L ^{s} _{j} (
\mathcal {A})=| \widetilde{\phi}_+|_{s} - | \widetilde{\phi} _{-}| _{s}$. 
\end{definition}
By above proposition, such $ \mathcal {A}$ are $L ^{s} _{j}$ minimizing on $
\mathcal {P} _{\pm}$. The name is due to fact that such $ \mathcal {A}$ have
certain distinguished flat sections, which will be apparent from the definition
of spectral length to be given later. This condition could also be viewed as
relaxation of the condition for being $L ^{+}$ minimizing, which translated into language of connections 
requires that $ \mathcal {A}$ has  a special,
constant flat section. 

Inspired by this we conjecture: \begin{conjecture} \label{hypothesis.1} For  $ \widetilde{\phi} \in \widetilde{\text {Ham}}(M, \omega)$
 Floer non-degenerate,  $L ^{s} _{j}$  attains a minimum on the generalized path
 space $ \mathcal {P} ({id, \widetilde{\phi}})$. Moreover, this minimum can be
 represented by a quasi-flat morphism, and so 
\begin{equation*} | \widetilde{\phi}| _{j}= | \widetilde{\phi}| _{s}.  
\end{equation*}
\end{conjecture}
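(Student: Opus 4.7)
The plan is to combine Proposition~\ref{theorem.main}, which already furnishes the inequality $|\widetilde{\phi}|_j \geq |\widetilde{\phi}|_s$, with Proposition~\ref{thm.adiabatic}, in order to produce an explicit minimizer of $L^s_j$ realizing the spectral norm.

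First I would attempt to show that any Floer non-degenerate $\widetilde{\phi}$ can be connected to $id$ by a strong Cerf homotopy in degree $2n$. The natural candidate is a generic one-parameter family $H_r$ interpolating between the zero Hamiltonian and a fixed Hamiltonian $H$ generating $\widetilde{\phi}$. Cerf-theoretic reasoning applied to the symplectic action functional suggests that for a generic such family only tame birth--death degenerations occur; the task is to verify that, in the top Conley--Zehnder degree $2n$ where $PSS([M])$ is represented, these degenerations satisfy the axioms of ``strong Cerf homotopy in degree $2n$'' of Section~\ref{section.exotic}. Once such a homotopy is available, Proposition~\ref{thm.adiabatic} directly yields a morphism $\mathcal{A} \in \mathcal{P}(id, \widetilde{\phi})$ on which $L^s_j$ attains its infimum.

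The second step is to show that this minimizer is actually quasi-flat. By Proposition~\ref{theorem.main} it suffices to establish the upper bound $L^s_j(\mathcal{A}) \leq |\widetilde{\phi}|_s$. I would obtain this by analyzing the distinguished sections produced in the adiabatic limit of the Cerf homotopy: a cycle in $CF_{2n}(\mathcal{A}_+)$ representing $PSS([M])$ whose action achieves the Oh--Schwarz invariant $\rho(\widetilde{\phi}, [M])$ should propagate, after the adiabatic stretch, into the image of $\Psi^{2n}_E$ with $E = \rho(\widetilde{\phi}, [M])$, yielding $L^+_j(\mathcal{A}) \leq \rho(\widetilde{\phi}, [M])$. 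Applying the same analysis to $\overline{\mathcal{A}}$ controls $L^-_j(\mathcal{A})$, and the difference reproduces the Oh--Schwarz spectral norm $|\widetilde{\phi}|_s = \rho(\widetilde{\phi}, [M]) + \rho(\widetilde{\phi}^{-1}, [M])$.

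The main obstacle, and the reason the statement is left as a conjecture, is the first step: the existence of strong Cerf homotopies in degree $2n$ between arbitrary Floer non-degenerate elements is an infinite-dimensional Cerf-theoretic statement about the symplectic action functional, and its potential failure in higher dimensions would be exactly the spectral-geometric analogue of the Lalonde--McDuff phenomenon. The evidence for $M = S^2$ quoted in the introduction indicates that a direct stratification argument is viable in low dimensions, but for general $M$ a genuinely new Cerf-theoretic framework seems necessary; one may also be forced to enlarge the morphism space $\mathcal{P}_{\pm}$ along the lines of the remark following Section~\ref{section.hamcategory}, permitting connections asymptotic (rather than equal) to flat ones, in order to absorb the relevant degenerations. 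A secondary technical point is that the adiabatic limit must produce honest flat distinguished sections rather than merely near-flat ones, which should be handled within the SFT-style compactification already built into $\mathcal{P}_{\pm}$.
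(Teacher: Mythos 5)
The statement you are addressing is stated in the paper as a conjecture, and the paper offers no proof of it; the only thing it proves in this direction is Theorem~\ref{thm.S2}, which handles the Lalonde--McDuff symplectomorphisms of $S^2$ via Lemma~\ref{lemma.cerf}. Your proposal correctly identifies the intended mechanism --- the lower bound from Proposition~\ref{theorem.main} combined with the upper bound from Proposition~\ref{thm.adiabatic}, the latter contingent on producing a strong Cerf homotopy in degree $2n$ from $id$ to $\widetilde{\phi}$ --- and this is exactly the route the paper follows in the one case it can settle. But your first step is not a proof of anything: it is a restatement of the conjecture in the form ``every Floer non-degenerate $\widetilde{\phi}$ admits a strong Cerf homotopy in degree $2n$ to $id$,'' and you candidly concede that you cannot establish this. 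So the proposal is an (essentially correct) reduction plus an open problem, not a proof.

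The concrete reason generic Cerf-theoretic reasoning does not suffice is that the ``strong Cerf homotopy in degree $2n$'' condition of Section~\ref{section.exotic} demands far more than tame birth--death behaviour: it requires globally defined smooth sections $u^i$ whose slices are flat sections of $\mathcal{A}_r$ for \emph{all} $r$, whose $r=1$ endpoints carry a cycle representing $PSS([M])$, and the same for the reversed family $\{\overline{\mathcal{A}}_r\}$. In a generic family of action functionals the degree-$2n$ generators representing $PSS([M])$ can be created or annihilated in birth--death pairs partway through the homotopy, in which case no such continuously-varying family of flat sections exists and the representative of $PSS([M])$ jumps; nothing in genericity alone prevents this. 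The paper's own Lemma~\ref{lemma.cerf} evades this only because on $S^2$ one knows every generator explicitly and can check by hand that, after the threshold $r'$, all further bifurcations avoid CZ degree $2$; and Example~\ref{example.quantum} shows that even for a stable Hofer geodesic on $S^2$ the naive distinguished section (the constant section at the maximizer) can fail to carry $PSS([M])$. Your second step is fine but redundant --- once a strong Cerf homotopy exists, Proposition~\ref{thm.adiabatic} already delivers the bounds $L^+_j(\mathcal{A}) \leq \rho(\widetilde{\phi},[M])$ and the corresponding bound for $\overline{\mathcal{A}}$, so quasi-flatness follows from Proposition~\ref{theorem.main} without a separate adiabatic analysis. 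The gap is entirely in step one, and it is the gap the paper itself declines to close.
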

 As a step towards this conjecture we have: 
 \begin{theorem} \label{thm.S2} The class of
 Lalonde-McDuff Hamiltonian symplectomorphisms of $S ^{2}$, can be joined to $id$ by quasi-flat and hence $L ^{s} _{j}$ minimizing
 morphisms.
\end{theorem}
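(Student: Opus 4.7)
The plan is to apply Proposition \ref{thm.adiabatic} to an explicit strong Cerf homotopy in degree $2n=2$ joining $id$ to a given Lalonde-McDuff $\phi\in\text{Ham}(S^2,\omega)$, and then to upgrade the resulting $L^s_j$-minimizing morphism to a quasi-flat one by showing that the Cerf family produces a distinguished flat section of top Conley-Zehnder index $2$ representing $PSS([S^2])$.

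First I would recall the concrete description of Lalonde-McDuff symplectomorphisms on $S^2$: they are time-one maps of explicit Hamiltonians whose height profiles obstruct the existence of stable Hofer geodesics from $id$. Crucially this obstruction is Hofer-Finsler in nature and places no obstruction on the existence of smooth Hamiltonian isotopies $\{H_t\}$ from $id$ to $\phi$. Any such isotopy produces, via the construction of Section \ref{sec.path.groupoid}, a reference simple morphism $\mathcal{A}_{H_t}\in\mathcal{P}(id,\widetilde{\phi})$ that will serve as the starting point of the Cerf deformation.

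Next I would build a strong Cerf homotopy $H_r$ in CZ degree $2$ by deforming the reference through a generic one-parameter family of Hamiltonians, exploiting that on $S^2$ the top CZ index is $2$ and is attained by a unique generator, namely the one tracking the global maximum of a Morse-Floer Hamiltonian. This top generator persists under $C^0$-small perturbation and continues to represent $PSS([S^2])$, so one can arrange the Cerf family so that every birth-death or handle-slide event occurs in strictly lower CZ index, giving precisely the degree-$2n=2$ control required by Section \ref{section.exotic}. Proposition \ref{thm.adiabatic} then delivers an $L^s_j$-minimizing simple morphism $\mathcal{A}$ joining regular approximations of $\widetilde{id}$ and $\widetilde{\phi}$.

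To upgrade to quasi-flatness I would identify the tracked top-index family as the limit of the continuation maps involved in the definition of $L^+_j(\mathcal{A})$, giving $L^+_j(\mathcal{A})=|\widetilde\phi|_s$, while applying the symmetric construction to $\overline{\mathcal{A}}$ for $\phi^{-1}$ yields $L^-_j(\mathcal{A})=0$ (the $id$-side being approximated by a $C^0$-small Morse perturbation of a constant Hamiltonian). Subtracting gives $L^s_j(\mathcal{A})=|\widetilde\phi|_s$, which combined with Proposition \ref{theorem.main} forces equality in the quasi-flat definition.

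The hardest step will be the degree-$2$ control of the Cerf family. One must arrange the deformation to be Floer-generic away from isolated codimension-one events and to transfer the $PSS([S^2])$-role across each such event without allowing a spurious action increase above $|\widetilde\phi|_s$. On $S^2$ the uniqueness of the global maximum tames the top-degree bookkeeping, but the intermediate Hamiltonians in the family will generically carry multiple local maxima that must be exchanged through saddle-maximum cancellations, and verifying that all such codimension-one transitions can be staged in CZ index strictly below $2$ is where the concrete geometry of the Lalonde-McDuff $\phi$ enters essentially as a constraint on the admissible Cerf paths.
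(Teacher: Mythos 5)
Your high-level skeleton matches the paper's: build a strong Cerf homotopy in degree $2$ and feed it to Proposition \ref{thm.adiabatic}, then combine with the lower bound of Proposition \ref{theorem.main} to force quasi-flatness. But the key technical step rests on a claim that is false precisely for Lalonde-McDuff symplectomorphisms. You assert that on $S^2$ the top CZ index is $2$, attained by the unique generator tracking the global maximum, that this generator ``persists under $C^0$-small perturbation and continues to represent $PSS([S^2])$,'' and that all birth-death events can therefore be staged in strictly lower CZ index. The defining feature of a Lalonde-McDuff Hamiltonian $H'$ is that the linearized time-one flow at the maximum (and minimum) is overtwisted: the constant orbit at the maximizer has Conley-Zehnder index $4$ (or higher) at the end of the homotopy, not $2$. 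The generator of $CF_2$ that represents $PSS([S^2])$ at $r=1$ is a \emph{new non-constant} periodic orbit, born together with a degree-$3$ partner in a bifurcation near the maximum at some threshold $r'$, and the smooth family of flat sections demanded by the definition of a strong Cerf homotopy must switch onto this bifurcating branch rather than stay on the constant section at the maximum. This is exactly the bookkeeping carried out in the paper's Lemma \ref{lemma.cerf}: track the constant orbit at $\max$ up to $r'$, follow the newborn degree-$2$ orbit afterwards, and check (using explicit knowledge of the generators, the fact that the CZ index of the constant orbit jumps by $2$ at each further overtwisting, and the known Floer homology) that no subsequent bifurcation touches degree $2$. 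Your plan, which tries to keep the constant maximum orbit as the degree-$2$ representative throughout and to push all events below degree $2$, cannot be executed for these Hamiltonians; this ``exotic'' behavior of the extremal is the entire point of the example.

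A secondary inaccuracy: in your quasi-flatness upgrade you claim $L^-_j(\mathcal{A})=0$ because the $id$-end is a small Morse perturbation of a constant Hamiltonian. What is actually needed, and what the proof of Proposition \ref{thm.adiabatic} supplies by running the identical tracked-section argument for the reflected family $\{\overline{\mathcal{A}}_r\}$ (the third bullet in the definition of a strong Cerf homotopy), is $L^-_j(\mathcal{A})\leq -\rho(\widetilde{\phi}^{-1},[M])$; this quantity is not zero in general, and it is the combination $\rho(\widetilde{\phi},[M])+\rho(\widetilde{\phi}^{-1},[M])=|\widetilde{\phi}|_s$ that matches the lower bound of Proposition \ref{theorem.main}.
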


Consider the direct analogue of the Hofer length functional defined on $
\mathcal {P} _{\pm}$ by 
 \begin{equation*}
 L _{H} ( \mathcal {A}) = \int _{ \mathbb{R} \times S ^{1}} \max _{M} K _{ \mathcal
 {A}} (z)-\min _{M} K _{ \mathcal {A}} (z) \, \omega _{st}. 
 \end{equation*}
Here is another corollary:
\begin{corollary} \label{corollary.main}The distance function $d _{H}$ induced
by $L _{H}$ gives a non-degenerate norm on $ {\text {Ham}}(M,
\omega)$, bounded from below by the spectral norm.
\end{corollary}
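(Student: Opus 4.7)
The plan is to derive the corollary from Proposition~\ref{theorem.main} by establishing the pointwise inequality
\[
L_H(\mathcal{A})\;\geq\;L^s_j(\mathcal{A})
\]
for every regular $\mathcal{A}\in\mathcal{P}_\pm$. Granting this, whenever $\mathcal{A}\in\mathcal{P}(id,\widetilde{\phi})$ combining with Proposition~\ref{theorem.main} yields $L_H(\mathcal{A})\geq L^s_j(\mathcal{A})\geq|\widetilde{\phi}|_s$, and taking the infimum over $\mathcal{A}$ and then pushing down to lifts produces $d_H(id,\phi)\geq|\phi|_s$, the asserted spectral lower bound. Applying the same estimate to the reflected connection $\overline{\mathcal{A}}$, and using $L^s_j(\overline{\mathcal{A}})=-L^s_j(\mathcal{A})$ together with $L_H(\overline{\mathcal{A}})=L_H(\mathcal{A})$, sharpens this to $L_H(\mathcal{A})\geq|L^s_j(\mathcal{A})|$, hence $d_H\geq|\cdot|_j$; non-degeneracy of $d_H$ is then inherited directly from the non-degeneracy of $|\cdot|_j$ in Proposition~\ref{theorem.main}.

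The pointwise inequality $L^s_j(\mathcal{A})\leq L_H(\mathcal{A})$ is a Floer continuation energy bound translated into curvature language. For $E=L_H(\mathcal{A})+\varepsilon$ the truncated operator $\Psi^k_E$ captures the full chain-level continuation homomorphism on cycles of suitably bounded action: the standard area--action identity for a $J_\mathcal{A}$-holomorphic section $u$ asymptotic to $\widetilde{\gamma}_\pm$ expresses $\int_u K_\mathcal{A}\,\pi^{*}\omega_{st}$ as an action difference minus the nonnegative vertical energy, and this action difference is in turn dominated by $L_H(\mathcal{A})$ up to terms absorbed by the spectral values at the two ends. Picking chain-level representatives on each side realizing the Oh--Schwarz spectral numbers $\rho(\mathcal{A}_\pm,[M])$ to within $\varepsilon$, one concludes that $\Psi^k_E$ takes the left representative to a cycle representing $PSS([M])$ on the right; after rearrangement this reads $L^+_j(\mathcal{A})-L^+_j(\overline{\mathcal{A}})\leq L_H(\mathcal{A})+\varepsilon$, giving the claim as $\varepsilon\to 0$.

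The remaining norm axioms for $d_H$ are elementary: symmetry follows from $L_H(\mathcal{A})=L_H(\overline{\mathcal{A}})$ since $\sigma$ preserves the fibrewise oscillation $\max_M K-\min_M K$; the triangle inequality follows from SFT-style concatenation of morphisms in $\mathcal{G}_M$, under which $L_H$ is additive because the curvature vanishes on the $r$-flat gluing neck. The main technical obstacle is the Floer energy identity invoked in the second step. In the curvature-twisted, $\eta$-rescaled setup of Section~\ref{section.hamcategory} one must carefully book-keep horizontal versus vertical contributions to the area--action relation, so as to close the bound at exactly $L_H(\mathcal{A})$ with no residual action defect, and then use the monotonicity assumption to rule out sphere bubbling and pass safely from the usual chain-level continuation homomorphism to the energy-truncated operator $\Psi^k_E$. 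This is a routine adaptation of the Floer estimates in \cite[Chapter 12]{MS}, but is where the real work of the proof lies.
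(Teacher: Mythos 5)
Your overall architecture is exactly the paper's: the entire published proof is the one-line observation $L_H(\mathcal{A})\ge L^s_j(\mathcal{A})$ together with Proposition \ref{theorem.main}, and your first and third paragraphs correctly supply the routine bookkeeping (reflection symmetry, passage to $|L^s_j|$, inheritance of non-degeneracy) that the paper leaves implicit. The one place where you add real content is the justification of the pointwise inequality, and there your mechanism is off: with the paper's conventions (\eqref{eq.pair} and \eqref{eq.action}) the energy identity reads $\int_u K_{\mathcal A}\,\pi^*\omega_{st}=\bigl(A_{H_+}(\widetilde{\gamma}_+)-A_{H_-}(\widetilde{\gamma}_-)\bigr)+E^{vert}(u)$ with $E^{vert}(u)\ge 0$, i.e.\ the action difference \emph{plus} the nonnegative vertical energy, so it gives a \emph{lower} bound on $\int_u K_{\mathcal A}\,\pi^*\omega_{st}$ and cannot close your upper bound as written. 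No action--area identity is needed: the inequality is immediate from the trivial pointwise bounds $\min_M K_{\mathcal A}(z)\le K_{\mathcal A}(u(z),z)\le\max_M K_{\mathcal A}(z)$, which give $L^+_j(\mathcal{A})\le\int\max_M K_{\mathcal A}\,\omega_{st}$ (since the unconstrained continuation map already represents $PSS([M])$) and $L^-_j(\mathcal{A})=L^+_j(\overline{\mathcal{A}})\ge\int\min_M K_{\mathcal A}\,\omega_{st}$ (since any representing chain uses at least one section), whence $L^s_j(\mathcal{A})=L^+_j(\mathcal{A})-L^-_j(\mathcal{A})\le L_H(\mathcal{A})$. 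Replace the second paragraph with this and the proof matches the paper's intent.
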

\begin{proof} We clearly have $L _{H} ( \mathcal {A}) \geq L ^{s} _{j} ( \mathcal {A})$
and so the corollary follows.
\end{proof} 
\subsection {Applications to Hofer geometry}
It may be difficult to see how the above theory can be useful in classical Hofer
geometry. In fact the applications we have in mind require another conjecture,
which we now give.  First a definition:
\begin{definition} Define the \emph { \textbf{pseudo-injectivity}} radius of a
symplectic manifold $M, \omega$ as 
\begin{align*} inj (M,\omega) = \sup \{r | \text { s.t. }|\widetilde{\phi}|_j
^{s} < r \Rightarrow  \text { space of quasi-flat morhispms in } \\\mathcal {P} (id,
\widetilde{\phi}) \text { is non-empty and contractible}.\}
\end{align*}
Note that this quantity is $j$ independent.
\end{definition}
\begin{conjecture} 
\begin{equation*} inj (S ^{2}, \omega _{st}) =1, \text { where} \int _{S ^{2}}
\omega _{st}=1.
\end {equation*}
 Moreover the space fibering over the Hofer $(1-
 \epsilon)$-ball in $ \text {Ham}(S^2, \omega _{st})$, $B (1-\epsilon)$ with
 fiber over $\phi \in B (1 -\epsilon)$: the space of quasi-flat morphisms from
 $id$ to $\phi$, is a Serre fibration. (For any $\epsilon>0$.)
\end{conjecture}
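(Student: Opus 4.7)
The plan is to establish the two assertions — the numerical identity $inj(S^2, \omega_{st}) = 1$ and the Serre fibration property — essentially in parallel, using Theorem \ref{thm.S2} and Proposition \ref{thm.adiabatic} as the main inputs, together with the explicit geometry of $\widetilde{\text{Ham}}(S^2, \omega_{st})$.

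For the lower bound $inj(S^2, \omega_{st}) \geq 1$, I would argue as follows. Given $\widetilde{\phi}$ with $|\widetilde{\phi}|_s < 1$, one first reduces to the case where $\widetilde{\phi}$ is Floer non-degenerate. On $S^2$ the spectral norm $|\widetilde{\phi}|_s < \int_{S^2} \omega_{st}$ places the generator representing $PSS([S^2])$ (the ``maximum'' fixed point) in the same filtration window as the unique degree $2n=2$ generator of any Morse-Floer connection $\mathcal{A}_{\widetilde{\phi}}$ close to a standard height function. Under this window condition one should be able to construct an explicit strong Cerf homotopy in degree $2n$ from $\mathcal{A}_{id}$ to $\mathcal{A}_{\widetilde{\phi}}$: because $S^2$ has minimal Chern number $2$, the two-generator ``thin'' Floer complex persists, and the bifurcation analysis reduces to a one-parameter family of standard handle-slide/death-birth moves that are controllable exactly as long as the area swept is less than $1$. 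Applying Proposition \ref{thm.adiabatic} then produces an $L^s_j$-minimizing morphism; promoting this to a quasi-flat one uses the inequality of Proposition \ref{theorem.main} together with the fact that on $S^2$, Theorem \ref{thm.S2} already supplies quasi-flat morphisms on the Lalonde-McDuff locus, which is dense in the relevant subset of $\widetilde{\text{Ham}}(S^2, \omega_{st})$; one then takes a Gromov compactness limit, ruling out degenerations by the area bound $< 1$.

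For the upper bound $inj(S^2, \omega_{st}) \leq 1$ I would look at an explicit family of rotation-like Hamiltonians $\widetilde{\phi}_t$ whose spectral norm $|\widetilde{\phi}_t|_s$ approaches $1$ from below, and show that the moduli of quasi-flat morphisms develops a non-trivial topology (for instance, becomes disconnected) precisely as $t \to 1^-$. This corresponds to the well-known bifurcation in Floer/Morse data for $S^2$ at half-area, and the jump can be detected by counting distinct Floer sections realizing the spectral minimum.

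For the Serre fibration statement, I would set up a parametric gluing argument. Given a cube $I^k \to B(1-\epsilon)$ and a lift on $I^{k-1} \times \{0\}$, one uses the Hofer bound $< 1-\epsilon$ to produce a uniformly bounded strong Cerf homotopy parametrically over the cube, and then applies a parametric version of the argument behind Proposition \ref{thm.adiabatic} to upgrade to a family of quasi-flat morphisms. The linearized CR operator associated to a regular quasi-flat morphism is surjective by definition, so the implicit function theorem provides local sections of the forgetful map $\{\text{quasi-flat morphisms}\} \to \text{Ham}$, and a standard obstruction-theoretic patching argument completes the lift.

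The main obstacle I expect is contractibility of the fiber, i.e.\ of the space of quasi-flat morphisms from $id$ to a fixed $\widetilde{\phi}$. Non-emptiness follows from the Cerf machinery, and path-connectedness can plausibly be reduced to the claim that any two strong Cerf homotopies in degree $2n$ between the same endpoints are themselves Cerf-homotopic through such homotopies, but higher connectivity requires iterated parametric transversality while preserving the \emph{equality} condition $L^s_j(\mathcal{A}) = |\widetilde{\phi}_+|_s - |\widetilde{\phi}_-|_s$. Because quasi-flatness is a closed condition cut out by an equality of spectral invariants, standard parametric genericity does not directly apply; one must show that the stratum of non-quasi-flat morphisms in a generic $k$-parameter family has codimension $\geq k+1$, which will likely require a refined analysis of how the spectral value of the PSS representative varies across bifurcations — the same type of analysis underlying Theorem \ref{thm.S2}, but carried out in families.
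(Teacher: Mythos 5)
This statement is stated in the paper as a \emph{conjecture}: the paper gives no proof of it, and in fact introduces it precisely because a proof is out of reach (it is offered as a hypothesis whose consequences would include contractibility of the Hofer $(1-\epsilon)$-ball in $\text{Ham}(S^2,\omega_{st})$, itself described as an open question). So there is no proof in the paper to compare against, and your proposal should be judged as an attempt to settle an open problem.

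As such an attempt it has genuine gaps, several of which you flag yourself. The most serious is the contractibility of the fiber: the definition of $inj(M,\omega)$ requires the space of quasi-flat morphisms in $\mathcal{P}(id,\widetilde{\phi})$ to be non-empty \emph{and contractible}, and your proposal reduces this to an unproved codimension estimate for the stratum of non-quasi-flat morphisms in generic $k$-parameter families. Since quasi-flatness is the closed condition $L^s_j(\mathcal{A}) = |\widetilde{\phi}_+|_s - |\widetilde{\phi}_-|_s$, cut out by an equality of spectral quantities rather than by a transversality condition, there is no standard mechanism producing such a codimension bound; this step is essentially the whole difficulty of the conjecture. Two further steps would fail as written. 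First, the promotion of an $L^s_j$-minimizing morphism to a quasi-flat one via ``density of the Lalonde-McDuff locus plus Gromov compactness'' is unsupported: the Lalonde-McDuff symplectomorphisms form a rather special family (overtwisted single rotations composed with convex reparametrizations), not a dense subset of the sub-level set $\{|\widetilde{\phi}|_s<1\}$, and even granting density, a limit of quasi-flat morphisms for varying endpoints need not be a morphism with the limiting endpoint, let alone one realizing the equality defining quasi-flatness. Second, in the Serre fibration argument you assert that surjectivity of the linearized CR operator gives local sections of the forgetful map from quasi-flat morphisms to $\text{Ham}(S^2,\omega_{st})$; but that operator governs the moduli of holomorphic sections for a \emph{fixed} connection, whereas the forgetful map varies the connection itself, so the implicit function theorem does not apply in the way you describe. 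The proposal is a reasonable sketch of how one might attack the conjecture using Proposition \ref{thm.adiabatic} and Theorem \ref{thm.S2}, but it does not constitute a proof, and the statement remains open in the paper.
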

It is well known that the Hofer $ (1-\epsilon)$-ball in $ \text {Ham}(S ^{2},
\omega _{st})$ is simply connected, which is why in the above formulation we did
not mention the universal cover. Given this  we may immediately deduce
contractability in $C ^{\infty}$ topology of  Hofer $(1 -\epsilon)$-ball in $
\text {Ham}(S ^{2}, \omega)$, which by itself is an open question and its solution opens the door on other
interesting open problems. 
\subsection{Acknowledgements} Special thanks to Yong-Geun Oh for patiently
listening to very preliminary ideas and making some excellent
suggestions, Dusa McDuff and Leonid Polterovich for discussions, as well as the
anonymous referee for helpful criticism. 

\section {Properties of the spectral length functional}
\label{section.properties}
\subsection {Some notations and conventions}
The  action
functional $$A _{H}: \widetilde{\mathcal {L} M} \to \mathbb{R},$$ 
is defined by
$$A _{H} (\gamma, D) = - \langle \omega, D \rangle + \int _{0} ^{1} H (\gamma (t), t) dt,$$
where $H: M \times S ^{1} \to \mathbb{R}$. The induced 
 Hamiltonian flow $X _{t}$ is given by
\begin{equation*} \omega (X _{t} , \cdot) = -dH _{t} (\cdot). 
\end{equation*}
The positive Hofer length functional $L ^{+} (p)$, for a path $p$ in $ \text
{Ham}(M, \omega)$ is defined by
\begin{equation*} L ^{+} (p) = \int _{0} ^{1} \max H  _{t} \; dt,
\end{equation*}
where $H _{t}$ is the generating Hamiltonian for $p$, normalized by
condition 
\begin{equation} \int _{M} H  _{t} \cdot \omega ^{n} =0.
\end{equation}
While the Hofer length functional $L$ is defined by 
\begin{equation*} L (p)= \int _{0} ^{1} \max H _{t} - \min H _{t}\; dt.
\end{equation*} 
 The Conley-Zehnder index is normalized so that
for a $C ^{2}$-small Morse function $H_t = H$,  the orbits corresponding to
critical points (with trivial bounding disks) have CZ equal to Morse index. For
 future reference we note that under above condition,  the CZ index of a
 constant orbit $(x, D)$ for $x$ a critical point and $D$ an equivalence class of a bounding
disk is the Morse index of $x$ minus $2 \langle c_1 (TM), D \rangle$.

\begin{proof} [Proof of Proposition \ref{theorem.main}]
From  $ \mathcal {A}$ we
may construct a certain remarkable closed 2-form  $\Omega _{ \mathcal {A}}$,
called the coupling form, originally  constructed in \cite{GuileminSternberg}.  This form
is characterized as follows, the restriction of $\Omega _{ \mathcal {A}}$ to fibers $M$ of $P \to \mathbb{C}$ coincides
with $\omega$, the $\Omega _{ \mathcal {A}}$-orthogonal subspaces in $TP$ 
to fibers $M$ are the horizontal subspaces 
and the value of  $\Omega _{ \mathcal {A}}$ on horizontal lifts of an
orthonormal pair $ \widetilde{v}, \widetilde{w} \in T _{m,z} P$ of $v,w \in T _{z} ( \mathbb{C})$ is given
by 
\begin{equation} \label {eq.curvature}{\Omega} _{ \mathcal {A}} ( \widetilde{v},
\widetilde{w}) = -K _{\mathcal {A}}  (m, z),
\end{equation}
 for $m \in M _{z}$. For the
full construction the reader is referred to \cite[Section 6.4]{MS2}. The most
important property of $\Omega _{ \mathcal {A}}$ is that its integral over a section of $P$
asymptotic to $ \widetilde{\gamma} _{-},  \widetilde{\gamma} _{+}$, is
just 
\begin{equation} \label {eq.action} -(A _{H _{ +}} ( \widetilde{\gamma} 
_{+}) - A _{H _-} ( \widetilde{\gamma} _{-} )).
\end{equation}
There exists some $u \in \mathcal {M} ( \mathcal {A}, \widetilde{\gamma}
_{-}, \widetilde{\gamma}_+)$, for some $ \widetilde{\gamma} _{\pm}$ with $A
_{H_+} ( \widetilde{\gamma}_+)= \rho ( \widetilde{\phi}_+, [M])$ and $A _{H_-} ( \widetilde{\gamma}_-)= \rho (
\widetilde{\phi}_-, [M])$. And let $\overline {u} $ be some completely analogous
section corresponding to $ \overline{ \mathcal {A}}$, where $ \overline{
\mathcal {A}}$ is as in \eqref{eq.overline}. Then we have \begin{align}  \label
{eq.pair} 0 \leq \int _{u } \Omega _{ \mathcal {A} } + \int _{ u } K _{ \mathcal {A} } \pi ^{*} \omega _{st}, \\ 
 \label {eq.pair2}
 0 \leq \int _{ \overline{u} }   \Omega _{
\overline {\mathcal {A}} } + \int _{ \overline{u} } K _{
 \mathcal {\overline{A}} } \pi ^{*} \omega _{st}, 
\end{align} 
By \eqref{eq.action} we get
$$-\int _{u } \Omega _{ \mathcal {A} } = \rho (\phi _{+}, [M]) - \rho (\phi
_{-}, [M]),$$
where $\rho (\phi, [M])$ is the usual spectral invariant of Oh-Schwarz.
Similarly,  
$$-\int _{ \overline{u} } \Omega _{\overline{ \mathcal {A}} } = -(\rho
(\phi _{+} ^{-1}, [M]) - \rho (\phi _{-} ^{-1}, [M])).$$ Consequently,
subtracting \eqref{eq.pair}, \eqref{eq.pair2} we get
\begin{equation*} \rho (\phi_+, [M])+ \rho (\phi_+ ^{-1}, [M]) - \rho (\phi
_{-}, [M]) - \rho (\phi _{-} ^{-1}, [M])\leq L ^{+} _{j} ( \mathcal {A}) - L
^{-} _{j} ( { \mathcal {A}}),
\end{equation*} 
i.e. $$|\phi_+|_s -|\phi_-|_s \leq L^+
 _{j} ( \mathcal {A}) - L ^{-} _{j} ( \mathcal {A}).$$ 
\end{proof}

We will see in the next section that  for a suitable, stable Hofer
geodesic $p$, $ \mathcal {A} _{p}$ is quasi-flat. However,  the quasi-flat  
condition is much more subtle as we will show.
\begin{remark} The above sufficient condition for being  $L ^{s} $
(locally)-minimizing is likely necessary, but this is left as a question.
\end{remark}
 \section {Exotic extremals for $L ^{s}$ and Lalonde-McDuff Hamiltonian
 symplectomorphisms.}
 \label{section.exotic}

We will say that $ \{\mathcal {A} _{r}\}$, $0 \leq r \leq 1$ is a
\emph {\textbf{strong Cerf homotopy}} of  Hamiltonian connections on $M
\times S ^{1}$, in degree 2n, with holonomy of $ \mathcal {A} _0 = id$, and
holonomy of $ \mathcal {A}_1= \widetilde{\phi}$ if the following holds:
\begin{itemize} 
  \item $ \{ \mathcal {A} _{r}\}$ is constant for $r$ near $0$, $1$. 
  \item There are smooth sections $ u ^{i}$ of $M (\times [0,1] \times S ^{1})$
  whose restrictions over $r \times S ^{1} \subset [0,1] \times S ^{1}$ is $\gamma
  ^{i}_{r}$, where $\gamma ^{i} _{r}$ is a flat section of $ \mathcal {A} _{r}$. For some $D ^{i} _{0}$, $
\widetilde{\gamma} ^{i}_{0} = (\gamma ^{i}_{0}, D ^{i}_0)$ is a generator of $CF
_{2n} ( \mathcal {A} _{0})$, while
 $ \widetilde{\gamma} ^{i} _{1} = (\gamma ^{i}_{1}, D
_{1})$ is a generator of $CF _{2n} ( \mathcal {A} _{1})$, where $D _{1}$ is naturally
induced by $D _{0}$. Moreover we ask that  for some collection $ \{c_i\}$, $c_i
\cdot \widetilde{\gamma} ^{i}_1 $ represents $PSS ( [M]) \in FH _{2n} ( \mathcal
{A} _{1})$. 
\item The second condition also holds for the family $ \{ \overline{ \mathcal
{A}} _{r}\}$, defined analogously to $ \overline{ \mathcal {A}}$, with
associated sections $ \{ \overline{u} _{j}\}$.
\end {itemize}
\begin{proof} [Proof of Proposition \ref{thm.adiabatic}] Under assumptions of
the theorem, there is clearly an extension of $ \{ \mathcal {A} _{r}\}$ to a
morphism $ \mathcal {A} \in \mathcal {P} (id, \widetilde{\phi}_+)$, such that the sections
$u _{i}$ are $ \mathcal {A}$-flat.  We assume without loss of generality that $u
_{i}$  are regular, i.e. the associated CR operator is surjective for every $u
_{i}$, otherwise perturb $ \mathcal {A}$ so that this is satisfied, see
\cite[Chapter 8]{MS}.  Since $\int _{u_i} K _{
\mathcal {A}} \pi ^{*} \omega _{st} = A _{H_1} ( \widetilde{\gamma}) $, it follows that $L ^{+}
_{j} ( \mathcal {A})$ is at most 
$\rho ( \widetilde{\phi}, [M])$. By an identical argument  $L ^{-} _{j}
( \mathcal {A})$ is at most $- \rho ( \widetilde{\phi} ^{-1}, [M])$.
\end {proof}
\subsection {Lalonde-McDuff symplectomorphisms} \label{example.hofer1}
Let $H: S ^{2} \to \mathbb{R}$  generate a single rotation of
$S ^{2}, \omega$ in time $1+ \epsilon$. Define $H'$ to be a small time
dependent perturbation of $h \circ H$, for $h: \mathbb{R} \to \mathbb{R}$, with
$h''>0$ and sufficiently large so that the linearized time one flow at the
maximum/minimum of $H'$ is overtwisted, i.e. has non-constant periodic orbits
with period less than or equal to one. The time dependent perturbation is meant
to fix the maximum/minimum of $h \circ H$. The set of Hamiltonian symplectomorphisms formed by time one flow of such an $H'$, is essentially the entire set of
Lalonde-McDuff symplectomorphisms (they do not ask for a time dependent perturbation).
Set $ \mathcal {A} _{r}$ to be the connection
with holonomy path generated by $\eta (r) \cdot H'$, with $\eta$ as in
\eqref{eq.eta}.
 \begin{lemma} \label{lemma.cerf} The family $ \{
\mathcal {A} _{r}\}$ is a strong Cerf homotopy in degree 2n.
\end{lemma}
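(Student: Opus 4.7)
The plan is to check the three bullets in the definition of strong Cerf homotopy in degree $2n=2$ for the family $\{\mathcal{A}_r\}$ determined by $\eta(r)H'$.

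The first bullet, constancy of $\mathcal{A}_r$ near the endpoints, is immediate from the piecewise definition of $\eta$ in \eqref{eq.eta}. For the second bullet, I take as the smooth sections $u^i$ the constant sections at the critical points $p_\pm$ of $H'$: choosing the time-dependent perturbation of $h \circ H$ so as to keep the critical points at the poles of $S^2$, the $p_\pm$ are zeros of $\eta(r) X_{H'}$ for every $r$ and therefore yield flat sections $\gamma^\pm_r \equiv p_\pm$ of $\mathcal{A}_r$ assembling to smooth sections $u^\pm : [0,1] \times S^1 \to M$. Any bounding disk $D_0^\pm$ chosen at $r = 0$ is then transported by natural induction along the trivial cylinders $u^\pm$ to a bounding disk $D_1^\pm$ of the same relative class.

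The main technical work lies in the Conley--Zehnder bookkeeping and the identification of $PSS([M])$. At $r = 0$, interpreted via a $C^2$-small Morse perturbation of the trivial connection, the normalization recalled in Section \ref{section.properties} gives Conley--Zehnder index equal to the Morse index, so $(p_+, \text{trivial})$ has index $2$. At $r = 1$, the linearized time-one flow at $p_+$ is a rotation whose angle has crossed $2\pi$ by the overtwisting hypothesis, producing a spectral-flow shift of $\pm 2$ in the index across the corresponding bifurcation of $\{\mathcal{A}_r\}$. I would determine the direction of this shift using the positivity of $h''$ and then exploit the freedom in the bounding-disk class (shifts by multiples of $-2\langle c_1(TS^2), [S^2]\rangle = -4$) together with the companion section $u^-$ to build a degree-$2$ combination $\sum c_i \widetilde{\gamma}^i_1$ whose $r = 0$ counterparts $\widetilde{\gamma}^i_0$ also sit in $CF_2(\mathcal{A}_0)$.

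The identification of $\sum c_i \widetilde{\gamma}^i_1$ with $PSS([M]) \in FH_2(\mathcal{A}_1)$ would then follow from the explicit action computation on these generators, combined with the observation that the non-constant orbits born at the overtwisting bifurcations near $p_\pm$ carry Conley--Zehnder indices distinct from $2n = 2$, so they neither supplement nor cancel the class computed from the constant sections in degree $2$. The third bullet, for the reversed family $\{\overline{\mathcal{A}}_r\}$ with sections $\overline{u}^i$, follows by the same argument applied to the Hamiltonian generating $(\phi')^{-1}$, whose fixed points are again $p_\pm$ and whose linearized flow is overtwisted in the symmetric sense. I expect the main obstacle throughout to be the precise Conley--Zehnder accounting across the bifurcations in degree $2$.
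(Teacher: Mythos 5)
Your overall strategy diverges from the paper's at the crucial point, and the divergence is fatal. You propose to keep the sections $u^i$ on the constant orbits at the critical points for all $r$ and to recover degree $2$ at $r=1$ by adjusting the bounding-disk class. Two things go wrong. First, the definition of a strong Cerf homotopy does not let you choose the cap at $r=1$ freely: $D_1$ is \emph{naturally induced} by $D_0$ via the cylinder traced out by $u^i$, so the only freedom is in $D_0$, and for $\widetilde{\gamma}^i_0$ to lie in $CF_2(\mathcal{A}_0)$ you are forced to take the maximum with the trivial cap (the minimum has Morse index $0$, and recapping shifts the index by multiples of $-2\langle c_1(TS^2),[S^2]\rangle=-4$, so it can never reach degree $2$). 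Second, once the linearized flow at the maximum is overtwisted, the constant orbit there with the induced cap has Conley--Zehnder index $4$, and recapping moves this only within $4+4\mathbb{Z}$; so no choice of cap, allowed by the definition or not, places a constant orbit in degree $2$ at $r=1$. Hence no combination of constant sections can satisfy the second bullet of the definition.

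Moreover, your assertion that the non-constant orbits born at the overtwisting bifurcations have Conley--Zehnder index different from $2$ is false, and the opposite fact is exactly what makes the lemma true. Since $HF_2(\mathcal{A}_1)\cong QH_2(S^2)\neq 0$ and, by the index arithmetic above, no constant orbit sits in degree $2$ after the first bifurcation, a non-constant generator of index $2$ must appear. The paper's proof tracks precisely this: at the first threshold $r'_1$ the constant orbit at the maximum jumps from index $2$ to index $4$ while a new non-constant orbit of index $2$ (paired with one of index $3$) is born from it, and for $h''$ large the subsequent bifurcations no longer affect degree $2$. The sections $u^i$ required by the definition are therefore \emph{not} the constant sections: they follow the constant orbit at the maximum up to $r'_1$ and then switch onto the newly born non-constant orbit, which arrives at $r=1$ in degree $2$ with the induced cap and, by the explicit count of generators and the known Floer homology, represents $PSS([M])$. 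Your proposal is missing this transition, which is the actual content of the proof; the endpoint constancy from \eqref{eq.eta} and the symmetric treatment of $\{\overline{\mathcal{A}}_r\}$ are handled as you say.
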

\begin{proof}   For $r $ less then some
threshold $r'$, there is a single generator $ \widetilde{\gamma} ^{r} _{\max}$ in
$ CF_2(\mathcal {A} _{r})$, corresponding to the the maximizer
$\max$ of $H'$. For $r$  just greater than $r'$ the linearized flow at $\max$ becomes
overtwisted, (and hence  $\widetilde{\gamma} ^{r}
_{\max}$ now has CZ index 4) and a new non-constant generator of CZ degree 2
appears in Floer complex of $ \mathcal {A} _{r}$, paired with a new generator of CZ index 3  so that at $r=r'_1$,
 there is bifurcation in the Floer chain complex. If $h''$ is sufficiently
 large, there will be similar bifurcations at $ \widetilde{\gamma} ^{r} _{\max}$
 for $r> r'_1$ creating more generators and more relations but non of this any longer happens for CZ degree 2. All of this is readily deduced simply from count of generators, (and we know them all
explicitely) the fact that CZ degree of $\widetilde{\gamma} ^{r} _{\max}$ goes up by 2 each time it gets more
overtwisted, and since the Floer homology is known. An identical
argument holds for the family $ \{ \overline{ \mathcal {A} _{}} _{r}\}$.
\end{proof}
\begin{proof} [Proof of Theorem \ref{thm.S2}] This follows by Proposition
\ref{thm.adiabatic}, and Lemma \ref{lemma.cerf} above.
\end{proof}
We end the section by noting that even for morphisms $ \mathcal {A} _{p}$,
coming from a quasi-autonomous path $p: [0,1] \to \text {Ham}(M, \omega)$,
locally minimizng the Hofer length functional, $L ^{s} ( \mathcal {A} _{p})$ may
not equal $L _{H} (p)$. 
 \begin{example} \label{example.quantum} The following very simple example was
 essentially suggested to me by Leonid Polterovich. 
Let $H: S ^{2}, \omega _{st} \to \mathbb{R}$, be a normalized Hamiltonian
generating Hamiltonian flow for a double rotation of $S ^{2}, \omega _{st}$ in
time $1+ \epsilon$, for a small $\epsilon$, and where $\int _{S ^{2}} \omega _{st}=1$.
Set $H'$ to be the result of a small time dependent perturbation, vanishing at
the maximazer $\max$ of $ h \circ H$, where $h'' \leq 0$, s.t the flow at the
poles becomes slow, i.e. so that linearized flow at the poles has no non-constant periodic orbits 
with period less than or equal to 1. Then by Ustilovsky \cite{U} time 1 flow of
$H'$ generates a stable Hofer geodesic, with $L ^{+}$ length close to 1. Set $
\widetilde{\phi}$ to be the time one map of $H'$, then $\rho (
\widetilde{\phi}, [M])$, is at most 1/2.
Consequently $PSS ( [M])$ is not represented by the constant orbit at $\max$.
It must then be represented by a new non-constant generator $ \widetilde{\gamma}
\in CF _{2} ( \widetilde{\phi})$. Now for $u \in \overline{ \mathcal {M}} (
\widetilde{\gamma})$ we have 
\begin{align*} \int _{u} K _{A} \pi ^{*} \omega _{st} = 
 \int _{u } \eta' (r)
 \cdot H _{p} \,\pi ^{*} \omega _{st}= \\ \int _{0} ^{1} \int _{-\infty} ^{\infty}
 \eta' (r)  H ( u (r, \theta), \theta) dr \wedge d \theta, 
\end{align*}
this integral is less than $L ^{+} (p)$, unless $u$ is a broken holomorphic
section, with principal component the constant section $u _{\max} (z) = \max$,
the maximizer of $H'$. But since we are allowing time dependent perturbations,
and $H'$ is assumed to be suitably generic, the 0 dimensional space $\overline{
\mathcal {M}} ( \widetilde{\gamma})$ will not have any broken elements of this
kind. It follows that $L ^{+} _{j} ( \mathcal {A} _{p}) \neq L ^{+} _{H} (p)$.
\end{example} 
\bibliographystyle{siam}  
\bibliography{link} 

\end{document}